\title[Estimation of ARMA models via convex optimization]
{Joint estimation and model order selection for one dimensional ARMA models via convex optimization: 
a nuclear norm penalization approach}
\author{St\'ephane Chr\'etien, Tianwen Wei {\tiny and} Basad Ali Hussain Al-sarray} \thanks{Laboratoire de Math\'ematiques, UMR 6623,
Universit\'e de Franche-Comt\'e, 16 route de Gray,
25030 Besancon, France. Email: stephane.chretien@univ-fcomte.fr}
\begin{document}
\maketitle


%
%
%

\begin{abstract}
The problem of estimating ARMA models is computationally interesting due to the nonconcavity of the log-likelihood function. Recent results were based on the
convex minimization. Joint model selection using penalization by a convex norm, e.g. the nuclear norm of a certain 
matrix related to the state space formulation was extensively studied from a computational viewpoint. The goal of the present short note 
is to present a theoretical study of a nuclear norm penalization based variant of the method of \cite{Bauer:Automatica05,Bauer:EconTh05} under the assumption of 
a Gaussian noise process.   
\end{abstract}

{\bf Keywords:} ARMA models, Time series, Low rank model, Prediction, Nuclear norm penalization.

\bigskip

\section{Introduction}

The Auto-regressive with moving average (ARMA) model is central to the field of time serie analysis and has been studied since the early thirties in the field of econometrics
\cite{Shumway:Springer14}. ARMA time series are sequences of the form $(x_{t})_{t\in \mathbb{N}}$ satisfying the following recursion 
\begin{eqnarray}
x_{t} & = & \sum^{p}_{i=1} a_{i}x_{t-i}+\sum^{q}_{j=1}b_{j}e_{t-j}+e_{t} \label{eq:Eq.1} 
\end{eqnarray}
for all $t\geq max \left\lbrace p,q\right\rbrace $, and we focus on the case where $(e_{t})_{t\in \mathbb{N}}$ is a sequence of zero mean 
independent identically distributed Gaussian random variables with variance denoted by $\sigma_\epsilon^2$ for simplicity
\footnote{Extentions to more sophisticated models for the noise $(\epsilon_t)$ in order to accomodate more applications were also studied extensively in the recent years but the Gaussian case is already a challenge from the algorithmic perspective as will be discussed below}. As is well known \cite{Shumway:Springer14},
time series model are adequate for a wide range of phenomena in economics, engineering, social science, epidemiology, ecology, signal processing, etc.
They can also be helpful as a building block in more complicated models such as GARCH models, which are particularly useful in financial time series analysis.

Two problems are to be addressed when studying ARMA time series:
\begin{enumerate}
\item estimate $p$ and $q$, the intrinsic orders of the model. 
\item estimate $a=(a_{1},a_{2},..., a_{p})$ and $b=(b_{1},b_{2},...b_{q})$.
\end{enumerate}

In the case where $q=0$, the convention is to write \eqref{eq:Eq.1} as:
\begin{eqnarray}
x_{t}&=&\sum^{p}_{i=1}a_{i}x_{t-1}+e_{t}
\end{eqnarray}
and $x_{t}$ to simply called an AR process. Estimation of $a$ is often performed using the conditional likelihood approach, given $x_{0},...,x_{p-1}$ yielding to the standard Yule-Walker equations. On the other hand, the model order selection problem is often performed using a penalized log-likelihood approach such as AIC,BIC,.., may also use the plain likelihood. We refer the reader to the standard text of Brockwell and Davis for more details on these standard problems.
Turning back to the full ARMA model, it is well known that the log-likelihood is not a concave function, and that multiple stationary points exist which can lead to severe bias when using local optimization routines for such as gradient or Newton-type methods for the joint estimation of $a$ and $b$. In Shumway and Stoffer 
\cite{Shumway:Springer14} and iterative procedure resembling the EM algorithm is proposed, which seems more appropriate for the ARMA model than standard optimization algorithms. However, no convergence guarantee towards a global maximizer is provided. Concerning the model selection problem, 
penalties play a prominent role in modern statistical theory and practice, in particular since the recent successes of the LASSO in regression and its multiple generalization. The nuclear norm penalization has played an import for many problems in engineering, machine learning and statistics such as matrix completion, \ldots
Application of nuclear norm penalization to state space model estimation and model order selection using a moment-like estimator in a convex optimization framework 
is proposed in \cite{FAZ}. The approach of \cite{FAZ} is a remarkable contribution since convex model selection and 
state space estimation were combined for the first time in the problem of Time Series. However the approach of \cite{FAZ} is supported by no theoretical guarantee
yet. Another approach for State Space model estimation was proposed in \cite{Bauer:Automatica05,Bauer:EconTh05} where good practical performances are reported and an 
asymptotic analysis is provided. This method as well as the unpenalized version of the method in \cite{FAZ} 
can be recast into the family of subspace methods; see \cite{Verh}. 
In such subspace-type methods, model order selection and model estimation are decoupled and it is natural to wonder if the 
approach of \cite{Bauer:Automatica05} can be refined in order to incorporate joint model selection using  a nuclear norm penalty as in \cite{FAZ}. 

Based on the evidence of the practical efficiency of subspace-type methods \cite{Verh},  
our goal in the present note is to propose a theoretical study of a nuclear norm penalized version of the subspace method from \cite{Bauer:Automatica05} 
which incorporates the main ideas in \cite{FAZ}.

\section{The subspace method}
\subsection{Recall on the subspace approach}

A real valued random discrete dynamical system $(x_t)_{t\in \mathbb N}$ admits a State Space representation if there
exists a discrete time process $(s_t)_{t\in \mathbb N}$ such that  
\begin{eqnarray*}
    s_{t+1} & = & As_{t}+Ke_{t}\\
    x_{t}   & = & Bs_{t}+e_{t}\label{eq:Eq.2} 
\end{eqnarray*}
where $(e_{t})_{t\in \mathbb N}$ is the noise, and $A\in \mathbb{R}^{p\times p}$, $B\in \mathbb{R}^{1\times p}$,
$K\in \mathbb{R}^{p\times 1}$ are parameter matrices. It is well known that ARMA processes 
admit a State Space representation and vice versa \cite{Shumway:Springer14}. 

\subsection{Prediction}
The problem of predicting $x_{t+j}$ for $j\geq 0$ based on the knowledge of $x_{t^\prime}$, $t^\prime <t$ and $s_{t}$ 
can be solved easily following the approach by Bauer \cite{Bauer:Automatica05,Bauer:EconTh05}. 
For given initial values $x_{0}$, $e_{0}$, the State Space representation gives
\begin{eqnarray*}
x_{t+h} & = & e_{t+h}+\sum^{h}_{j=1} BA^{j-1}Ke_{t+h-j}+BA^{h}s_{t}
\end{eqnarray*}
On the other hand, the State Space representation implies that 
\begin{eqnarray*}
s_{t} & = & As_{t-1}+Ke_{t-1}\\
& = & As_{t-1}+K\left( x_{t-1}-Bs_{t-1}\right)   \\
& = & \left( A-KB\right)s_{t-1}+Kx_{t-1} \\
& = & \cdots \\
\end{eqnarray*}
Thus, we obtain 
\begin{eqnarray*}
s_t & = & \left( A-KB\right)^{t}s_{0}+\sum_{j=0}^{t-1}\left(A-KB\right)^{j} Kx_{t-1-j}. 
\end{eqnarray*}

In what follows, we will assume that we observe $x_0,\ldots,x_T$ and that $t>0$ is such that $T-2t+1>0$.

\subsection{Prediction with Hankel matrices}
We will rewrite the prediction problem in terms of some Hankel matrices. For this purpose, define 
\begin{eqnarray*}
\bar{A}= A-KB, \quad \bar{\mathcal A}_0=[\bar{A}^ts_0,\bar{A}^{t+1}_0,\ldots,\bar{A}^{T-t+1}s_0], \quad 
\mathcal K & = & \left[\bar{A}^{t-1}K,\cdots,\bar{A}^{2}K, \bar{A}K, K\right],
\end{eqnarray*}  
\begin{eqnarray*}
\mathcal O & =  
\left[
\begin{array}{c}
B \\
BA \\
\vdots \\
BA^{t-1}
\end{array}
\right]
\quad \textrm{ and }\quad 
\mathcal N = & 
\left[
\begin{array}{cccccc}
1 & 0 & \cdots & \cdots & \cdots & 0 \\
BK & 1 & 0 & \cdots & \cdots & 0 \\
\vdots & \vdots & \vdots & \vdots & \vdots & \vdots \\
BA^{t-2}K & BA^{t-3}K & \cdots & \cdots & BK & 1 
\end{array}
\right].
\end{eqnarray*}
Then, we have 
\begin{eqnarray}
\left[
\begin{array}{c}
x_t \\
\vdots \\
x_{2t-1}
\end{array}
\right]
& = & 
\mathcal O s_t + 
\mathcal N \left[
\begin{array}{c}
e_{t} \\
\vdots \\
e_{t+h}
\end{array}
\right]
\label{1}
\end{eqnarray}
and 
\begin{eqnarray}
s_t & = & \mathcal K 
\left[
\begin{array}{c}
x_0 \\
\vdots \\
x_{t-1}
\end{array}
\right]+ 
\left( A-KB\right)^{t}s_{0}.
\label{2}
\end{eqnarray}
Combining (\ref{1}) and (\ref{2}), we thus obtain 
\begin{eqnarray*}
\left[
\begin{array}{c}
x_t \\
\vdots \\
x_{2t-1}
\end{array}
\right]
& = & 
\mathcal O \mathcal K 
\left[
\begin{array}{c}
x_0 \\
\vdots \\
x_{t-1}
\end{array}
\right]+ 
\mathcal O \left( A-KB\right)^{t}s_{0} + 
\mathcal N \left[
\begin{array}{c}
e_{t} \\
\vdots \\
e_{t+h}
\end{array}
\right].
\end{eqnarray*}
Now, define
\begin{eqnarray*}
X_{past} & =  \left[ \begin{array}{cccc}
 x_{0}   &  x_{1}  &  \cdots  & x_{T-2t+1}  \\
 x_{1}   &  x_{2}  &  \cdots  & x_{T-2t+2}    \\
 \vdots  &  \vdots &  \vdots  & \vdots      \\
 x_{t-1}   & x_{t} &  \cdots  & x_{T-t}     \\ 
\end{array}\right]
\quad \textrm{ and } \quad 
X_{future}  = & \left[ \begin{array}{cccc}
x_{t}  &  x_{t+1}   &\cdots  &  x_{T-t+1}  \\
x_{t+1}  &  x_{t+2}   &\cdots  &  x_{T-t+2}   \\
\vdots   &  \vdots    &\vdots  &  \vdots       \\
x_{2t-1}   &  x_{2t}  &\cdots  &  x_{T}     \\ 
\end{array}\right]. 
\end{eqnarray*}
Both matrices are Hankel matrices. The first one represents the past values and and second one the future values. 
Define also the noise matrix 
\begin{eqnarray*}
E & = & \left[ \begin{array}{cccc}
e_{t}  &  e_{t+1}   &\cdots  &  e_{T-t+1}  \\
e_{t+1}  &  e_{t+2}   &\cdots  &  e_{T-t+2}   \\
\vdots   &  \vdots    &\vdots  &  \vdots       \\
e_{2t-1}   &  e_{2t}  &\cdots  &  e_{T}     \\ 
\end{array}\right]. 
\end{eqnarray*}
All these Hankel matrices are related by the following equation
\begin{eqnarray*}
X_{future} & = & \mathcal O \mathcal K \: X_{past} + \mathcal O \bar{\mathcal A}_0+ \mathcal N E.
\end{eqnarray*}

\section{The estimation problem}
In the last section, we showed that the matrices $A$, $B$ and $K$ of the State Space model entered nicely into
an equation allowing prediction of future values based on past values of the dynamical system. Our goal is now 
to use this equation  to estimate the matrices $A$, $B$ and $C$. One interesting feature of our procedure 
is that the dimension $p$ of the State Space model can be estimated jointly with the matrices themselves. 

\subsection{Estimating $\mathcal O \mathcal K$}
The matrix $\mathcal O \mathcal K$ can be estimated using a least squares approach corresponding to solving 
\begin{eqnarray}
\min_{L\in \mathbb R^{t\times t}} \: \frac12 \Vert X_{future}-L\: X_{past} \Vert_F^2.  
\label{LS}
\end{eqnarray}
This procedure will make sense if the term $\mathcal O \bar{\mathcal A}_0$ is small. 
This can indeed be justified if $t$ is large and if $\Vert\bar{A}\Vert$ is small. Let us call $\hat{L}$ 
a solution of (\ref{LS}).

\subsection{Nuclear Norm penalized $\ell_1$-norm for low rank estimation}
An interesting property of the matrix $\mathcal O\mathcal K$ is that its rank is the State's dimension $p$ when
$A$ has full rank. Moreover, $\mathcal O\mathcal K$ has small rank compared to $t$ when $t$ is large compared to $p$. 
Therefore, one is tempted to penalize the least squares problem (\ref{LS}) with a low-rank promoting penalty. 

One option is to try to solve 
\begin{eqnarray}
\label{rankLS}
\min_{L\in \mathbb R^{t\times t}} \: \frac{1}{2}\Vert X_{future}-L\: X_{past}\Vert^{2}_{F}+\lambda \: {\rm rank}
\left( L\right) 
\end{eqnarray}
The main drawback of this approach is that the rank function is non continuous and non convex. This 
renders the optimization problem intractable in practice. 
Fortunately, the rank function admits a well known convex surrogate, which is the nuclear norm, i.e. the sum of the singular values, denoted by $\Vert .\Vert_{*}$.\\
Thus, a nice convex relaxation of (\ref{rankLS}) is given by
\begin{eqnarray}
\label{nucLS}
\min_{L\in \mathbb R^{t\times t}}\: \frac{1}{2}\Vert X_{future}-L\: X_{past}\Vert^{2}_{F}+\lambda \: \Vert L\Vert_{*}. 
\end{eqnarray} 
It has been observed in practice that nuclear norm penalized least squares provide low rank solution 
for many interesting estimation problems \cite{RechtEtAl:SIREV0?}.  

\section{Main results}
\label{theory}
The penalized least-squares problem (\ref{nucLS}) can be transformed into the following constrained problem 
\begin{eqnarray}
\label{constrainedform}
\min_{L\in \mathbb R^{t\times t}} \: \Vert L\Vert_* \quad \textrm{ subject to } \quad \Vert X_{future}-L\: X_{past}\Vert_{F}\le \eta,                                                                                                                                                                                                                                                                                                                                                                                                                                                                                                                                                                                                                                                                                                                                                                                                                                                                                                                    
\end{eqnarray}
for some appropriate choice of $\eta$. 

Let $\Sigma$ denote the covariance matrix of $[x_0,\ldots,x_{t-1}]^t$ and let $\Sigma^{\pm \frac12}$ denote the square root of 
$\Sigma^{\pm 1}$. Then,
Let $H$ be the random matrix whose components are given by 
\begin{eqnarray*}
H_{s,r} & = & \sum_{s^\prime=0}^{T-2t+1} \epsilon_{s,s^\prime} z_{s^\prime+r}.
\end{eqnarray*}
where $\epsilon_{s,s^\prime}$, $s=0,\ldots,t-1$ and $s^\prime=0,\ldots,T-2t+1$ are independent Rademacher random variables which 
are independent of $z_{s^\prime}$, $s^\prime=0,\ldots,T-t$.
Let $\Sigma^H$ denote the covariance matrix of ${\rm vec}(H)$. 
Let $\mathcal M$ denote the operator defined by 
\begin{eqnarray}
\mathcal M & = & {\rm Mat}\left(\Sigma^{H^{-1/2}}{\rm vec}(\cdot)\right)
\label{Mdef}
\end{eqnarray}
and let $\mathcal M^{-*}$ denote the adjoint of the inverse of $\mathcal M$. The fact that 
$\mathcal M$ is invertible is easily obtained (see Section \ref{conseq}) 
and is seen from the fact that $\Sigma^{H}$ has all its eigenvalues 
equal to $T-2t+1$ according to Section \ref{specSigH}. 
Let $\mathcal S$ be the operator defined by
\begin{eqnarray*}
\mathcal S(\cdot) &\mapsto & \mathcal M^{-*} \left(\cdot \right) \: \Sigma^{-1/2}.
\end{eqnarray*}
and let $\mathcal T$ be the mapping 
\begin{eqnarray*}
\mathcal T(\cdot) \mapsto \frac1{\sqrt{t\: (T-2t+2)}} \: \mathcal M^{-1} \: \left(\cdot\:\:\Sigma^{\frac12}\right).
\end{eqnarray*}

Our main result is the following theorem.

\begin{theo}
\label{main}
Let $\xi$ be any positive real number. Assume that $\eta$ is such that 
\begin{eqnarray}
\Vert\mathcal O \bar{\mathcal A} \: s_0+ \mathcal N E \Vert & \ge & \eta
\end{eqnarray}
with probability less than or equal to $e^{-\nu^2/2}$ for some $\nu>0$. Then, with probability greater than or equal to $1-e^{-\nu^2/2}$, 
\begin{eqnarray}
\Vert \mathcal O \mathcal K -\hat{L}\Vert_F & \le & \frac{2\eta}
{\Lambda},
\end{eqnarray}
where
\begin{eqnarray*}
& & \Lambda \ge \xi \sqrt{t(T-2t+1)} \left(1-
\frac{4 \xi} {\sqrt{\pi}} \left(\frac{e}{2} \right)^{\frac{1}4}  
\sigma_{\max}\left(\Sigma^{1/2}\right)\: \sqrt{t}\right) \\
\\
& & \hspace{1cm}- 2\sqrt{2}\: \sqrt{\frac{t}{T-2t+1}}\: \sqrt{\frac{\sigma_{\max}(\Sigma)}{\sigma_{\min}(\Sigma) }}
\:  
\sqrt{\left( \left(2\: ct+1\right)
+c \: \sqrt{t}\right) \: \sqrt{t} \: \frac{{\rm rank}(\mathcal O\mathcal K)}{c\: \sqrt{\sigma_{\min}(\Sigma)}} 
+ 2 \: t}- \nu \xi. 
\end{eqnarray*}
\end{theo}
In the remainder of this section, we introduce the results, notations and tools for proving this theorem. The proof 
is given in Section \ref{pf}. 

\subsection{Some notations}
For all $s=0,\ldots,t-1$ and $s^\prime=0,\ldots,T-2t+1$, 
let $\mathcal A_{s,s^\prime}$ denote the operator defined by 
\begin{eqnarray}
\mathcal A_{s,s^\prime}(L) & = & \sum_{r=0}^{t-1} L_{s,r} x_{s^\prime+r} 
\end{eqnarray}
and let $\mathcal A$ denote the operator 
\begin{eqnarray}
L \mapsto \left(\mathcal A_{s,s^\prime}(L)\right)_{s=1,\ldots,t,\: s^\prime=0,\ldots,T-2t+1}.
\end{eqnarray}
The descent cone of the nuclear norm at $\mathcal O\mathcal K$, denoted by $\mathcal D(\Vert\cdot\Vert_*,\mathcal O\mathcal K)$, 
is defined by 
\begin{eqnarray}
\mathcal D(\Vert\cdot\Vert_*,\mathcal O\mathcal K) & = & 
\cup_{\tau>0}\: \left\{D \in \mathbb R^{t\times t} \mid \Vert H+\tau D\Vert_* \le \Vert H\Vert_*  \right\}. 
\end{eqnarray} 

\subsection{A deterministic inequality}

The following result will be the key of our analysis.  
\begin{theo} {\bf \cite{Tropp:Renaissance15}}
\label{deter}
Assume that 
\begin{eqnarray}
\Vert\mathcal O \bar{\mathcal A} \: s_0+ \mathcal N E \Vert & \le & \eta.
\end{eqnarray}
Let $\hat{L}$ denote any solution of (\ref{constrainedform}). Then, 
\begin{eqnarray}
\Vert \mathcal O \mathcal K -\hat{L}\Vert_F & \le & \frac{2\eta}{\lambda_{\min}\left(\mathcal A,
\mathcal D(\Vert\cdot\Vert_*,\mathcal O\mathcal K) \right)},
\end{eqnarray}
where 
\begin{eqnarray}
\lambda_{\min}\left(\mathcal A,\mathcal D(\Vert\cdot\Vert_*,\mathcal O\mathcal K)\right) 
& = & \min_{\stackrel{\Vert D\Vert_F=1,}{D\in \mathcal D(\Vert\cdot\Vert_*,\mathcal O\mathcal K)}} 
\quad \Vert \mathcal A(D)\Vert_F. 
\end{eqnarray}
\end{theo}

\subsection{A lower bound on $\lambda_{\min}\left(\mathcal A,\mathcal D(\Vert\cdot\Vert_*,\mathcal O\mathcal K)\right)$}
We will closely follow the approach of Tropp based on Mendelson's bound. For this purpose, we will need the 
definition of the Gaussian mean width $w_G(\mathfrak X)$ of a set $\mathfrak X\in \mathbb R^d$
\begin{eqnarray*}
w_G(\mathfrak X) & = & \mathbb E\left[\sup_{x\in \mathfrak X} \: \langle G,x \rangle\right],
\end{eqnarray*}
where the expectation is taken with respect to the Gaussian random vector $G$ taking values in $\mathbb R^d$. 
The statistical dimension of $\mathfrak X$ (see e.g. \cite{ALMT14}) is 
Let us also denote by $Q_{\xi}$ the quantity 
\begin{eqnarray*}
Q_{\xi}(D) & = & \frac1{t}\sum_{s=0}^{t-1} \: \bP \left(\left\vert\sum_{r=0}^{t-1} D_{s,r} z_{s^\prime+r}\right\vert\ge \xi\right),
\end{eqnarray*}
which, as one might easily check, does not depend on $s^\prime$. Recall that $\Sigma$ is the covariance matrix of 
$[x_0,\ldots,x_{t-1}]^t$ and that $\Sigma^{\pm \frac12}$ denotes the square root of $\Sigma^{\pm 1}$. Thus, 
\begin{eqnarray*}
\left[
\begin{array}{c}
z_0\\
\vdots \\
z_{t-1}
\end{array}
\right]
:=\Sigma^{-\frac12}
\left[
\begin{array}{c}
x_0\\ 
\vdots \\
x_{t-1}
\end{array}
\right]
\end{eqnarray*}
follows the standard Gaussian distribution $\mathcal N(0,I)$. Let $\tilde{D}=D\Sigma^{\frac12}$. We now state Tropp's result. 
\begin{lemm}
\label{tropp}
Define
\begin{eqnarray*}
K & = & \frac1{\sqrt{t\: (T-2t+2)}} \: \mathcal M^{-1}\left(\Sigma^{-1/2} \: \mathcal D(\Vert\cdot\Vert_*,\mathcal O\mathcal K)\right).
\end{eqnarray*}
We have 
\begin{eqnarray*}
\lambda_{\min}\left(\mathcal A,\mathcal D(\Vert\cdot\Vert_*,\mathcal O\mathcal K)\right) 
& \ge & 
\quad \xi \sqrt{t(T-2t-2)} \quad \inf_{\Vert \tilde{D}\Sigma^{1/2} \Vert_F=1} 
\quad Q_{2\xi}(\tilde{D}) - \frac{2 }{\sigma_{\min}(\mathcal S)}\: w_G(K)- \nu \xi 
\end{eqnarray*}
with probability greater than or equal to $1-\exp(-\nu^2/2)$.
\end{lemm}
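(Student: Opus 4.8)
The plan is to recognize Lemma~\ref{tropp} as an instance of Mendelson's small-ball method in the form established by Tropp \cite{Tropp:Renaissance15}, and to perform the change of variables that turns the data-dependent measurements into standard Gaussian ones. First I would rewrite the constrained conic singular value as a sum of squared scalar measurements. Since $\left(\mathcal A(D)\right)_{s,s'}=\sum_r D_{s,r}x_{s'+r}=\langle D_{s,\cdot},\phi_{s'}\rangle$, with $\phi_{s'}=[x_{s'},\ldots,x_{s'+t-1}]^t$ the $s'$-th column of $X_{past}$, we have $\mathcal A(D)=D\,X_{past}$ and hence $\Vert\mathcal A(D)\Vert_F^2=\sum_{s=0}^{t-1}\sum_{s'=0}^{T-2t+1}\langle D_{s,\cdot},\phi_{s'}\rangle^2$, a sum of $m=t\,(T-2t+2)$ scalar squared measurements. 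Thus $\lambda_{\min}\bigl(\mathcal A,\mathcal D(\Vert\cdot\Vert_*,\mathcal O\mathcal K)\bigr)$ is the infimum of $\Vert\mathcal A(D)\Vert_F$ over the descent cone intersected with the Frobenius sphere, which is exactly the quantity Mendelson's bound controls from below.

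Second, I would whiten the measurements. Writing $x=\Sigma^{1/2}z$ with $z\sim\mathcal N(0,I)$ and setting $\tilde D=D\Sigma^{1/2}$, each scalar measurement becomes $\langle D_{s,\cdot},\phi_{s'}\rangle=\sum_r\tilde D_{s,r}z_{s'+r}$, so the marginal small-ball probability of the $(s,s')$ measurement equals $\bP\bigl(\vert\sum_r\tilde D_{s,r}z_{s'+r}\vert\ge 2\xi\bigr)$, which by stationarity does not depend on $s'$. Summing the $t(T-2t+2)$ marginals gives $m\,Q_{2\xi}(\tilde D)$ for the expected number of ``large'' measurements, and Tropp's form of the small-ball bound turns this into the leading term $\xi\sqrt{t(T-2t+2)}\,Q_{2\xi}(\tilde D)$; lower bounding the small-ball functional by its infimum over the whole sphere $\{\Vert\tilde D\Sigma^{1/2}\Vert_F=1\}$ (legitimate, since restricting to the cone only increases it) produces the first term of the claim.

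Third, the fluctuation of $\Vert\mathcal A(D)\Vert_F$ around this expected behaviour is handled by the symmetrization-plus-contraction step of the small-ball method, which replaces the empirical process by a Rademacher average $2\,\mathbb E\sup_{\tilde D}\langle\Phi,\tilde D\rangle$, where $\Phi$ is, after whitening, the matrix $H$ with entries $H_{s,r}=\sum_{s'}\epsilon_{s,s'}z_{s'+r}$. Conditionally on the signs $\epsilon$, ${\rm vec}(H)$ is Gaussian with covariance $\Sigma^H$, so applying $\mathcal M={\rm Mat}\bigl(\Sigma^{H^{-1/2}}{\rm vec}(\cdot)\bigr)$ turns $\langle H,\tilde D\rangle$ into $\langle G,\mathcal M^{-1}(\tilde D)\rangle$ for a standard Gaussian matrix $G$; pushing the whitening $\Sigma^{-1/2}$ and the normalisation $1/\sqrt{t(T-2t+2)}$ through the descent cone identifies this Rademacher width with the genuine Gaussian mean width $w_G(K)$ of $K=\tfrac1{\sqrt{t(T-2t+2)}}\mathcal M^{-1}\bigl(\Sigma^{-1/2}\mathcal D(\Vert\cdot\Vert_*,\mathcal O\mathcal K)\bigr)$. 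The factor $1/\sigma_{\min}(\mathcal S)$ then enters as the bookkeeping constant converting from the distorted sphere in the coordinates produced by $\mathcal S(\cdot)=\mathcal M^{-*}(\cdot)\,\Sigma^{-1/2}$ back to the Frobenius sphere of $D$, since a unit-Frobenius $D$ maps to a point of norm at least $\sigma_{\min}(\mathcal S)$.

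Finally, the $-\nu\xi$ term with confidence $1-\exp(-\nu^2/2)$ comes from Gaussian concentration. The main obstacle is precisely that the scalar measurements are \emph{not} independent, as the Hankel structure forces the columns $\phi_{s'}$ to overlap, so the bounded-difference concentration that usually underpins Mendelson's bound does not apply directly. The way around this is to observe that $\inf_{D}\Vert\mathcal A(D)\Vert_F$ is a Lipschitz function of the \emph{single} underlying vector $z$ of i.i.d.\ standard Gaussians, with Lipschitz constant of order $\xi$ after the sphere normalisation; Gaussian concentration for Lipschitz functions then yields deviation $\nu\xi$ with the stated tail, regardless of the dependence among the $\phi_{s'}$. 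Assembling the expectation lower bound (terms one and two) with this concentration estimate gives the inequality. I expect verifying the Lipschitz constant and tracking the normalisation constants through $\mathcal M$, $\mathcal S$ and $\mathcal T$ to be the most delicate bookkeeping, and the independence issue just described to be the key conceptual point.
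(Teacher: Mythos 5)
Your steps one through three reproduce the paper's proof almost exactly: the paper also writes $\lambda_{\min}\left(\mathcal A,\mathcal D(\Vert\cdot\Vert_*,\mathcal O\mathcal K)\right)$ as an infimum of a sum of scalar measurements, whitens with $\Sigma^{\pm1/2}$ and sets $\tilde D=D\Sigma^{1/2}$, applies the Markov/indicator (small-ball) step to produce the $\xi\, Q_{2\xi}$ term, symmetrizes to get the Rademacher average involving $H_{s,r}=\sum_{s^\prime}\epsilon_{s,s^\prime}z_{s^\prime+r}$, and then whitens $H$ via $\mathcal M$ to identify the width term $w_G(K)/\sigma_{\min}(\mathcal S)$. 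Up to that point you and the paper agree.

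The gap is in your fourth step, the one producing $-\nu\xi$. First, your claim that $\inf_D\Vert\mathcal A(D)\Vert_F$ is Lipschitz in $z$ ``with Lipschitz constant of order $\xi$'' is unsupported and cannot be correct: $\xi$ is a free threshold introduced by the Markov step and has nothing to do with the geometry of the linear map $z\mapsto D\,X_{past}$. The true Lipschitz constant is $\sup_{\Vert D\Vert_F=1}$ of the operator norm of that map, and since each coordinate $z_j$ enters up to $t$ columns of the Hankel matrix $X_{past}$, it is of order $\sqrt{t}\,\sigma_{\max}\left(\Sigma^{1/2}\right)$; Gaussian concentration would therefore give a deviation of order $\nu\sqrt{t}\,\sigma_{\max}\left(\Sigma^{1/2}\right)$, not $\nu\xi$. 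Second, and more structurally, Gaussian--Lipschitz concentration applied to $\inf_D\Vert\mathcal A(D)\Vert_F$ controls its deviation around $\mathbb E\left[\inf_D\Vert\mathcal A(D)\Vert_F\right]$, a quantity that none of your earlier steps lower bounds: the small-ball and symmetrization steps control the expectation of the \emph{bounded indicator process} $f(z)=\sup_{\tilde D}\sum_{s,s^\prime}\left(Q_{2\xi}(\tilde D)-\mathds{1}\left\{\left\vert\sum_r\tilde D_{s,r}z_{s^\prime+r}\right\vert\ge\xi\right\}\right)$, not of the empirical process itself. Mendelson's device is precisely to pass to $f$ because it is bounded, so that concentration for functions with bounded differences applies even though indicators are not Lipschitz; conversely, your Gaussian--Lipschitz route cannot be applied to $f$ for the same reason. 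The paper handles the dependence you correctly flag without abandoning bounded differences: it applies the bounded difference inequality to $f$ viewed as a function of the \emph{independent} underlying Gaussians $z_0,\ldots,z_{T-t}$ (independence of the overlapping measurements is never needed, only independence of the coordinates of $z$), letting the Hankel overlap be absorbed into the difference constants. Your instinct that this is the delicate point is sound --- indeed the paper's own constant $2t(T-2t+2)$ per coordinate would, through McDiarmid, give a deviation substantially larger than the $\nu\sqrt{t(T-2t+2)}$ it asserts --- but your remedy as written does not assemble into a proof of the stated $-\nu\xi$ term.
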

\begin{proof}
See Section \ref{prftropp}. 
\end{proof}

\subsection{A lower bound on $\inf_{\Vert \tilde{D}\Sigma^{1/2} \Vert_F=1} \: Q_{2\xi}(\tilde{D})$ }
Since 
\begin{align*}
Z & =\sum_{r=0}^{t-1} \tilde{D}_{s,r} z_{s^\prime+r}
\end{align*}
follows the law $\mathcal N(0,\sum_{r=0}^{t-1} \tilde{D}_{s,r}^2)$, using Lemma \ref{cki} from the Appendix, we get 
\begin{align*}
\bP\left( Z^2 \le \left(\sum_{r=0}^{t-1} \tilde{D}_{s,r}^2\right) \sqrt{u}\right)  
& \le \frac2{\sqrt{\pi}} \left(e\ \frac{u}{2} \right)^{\frac{1}4}.
\end{align*}
Thus, setting 
\begin{align*}
u & =\frac{\xi^4}{\left(\sum_{r=0}^{t-1} \tilde{D}_{s,r}^2\right)^2}, 
\end{align*}
we obtain   
\begin{align*}
\bP \left(\left\vert\sum_{r=0}^{t-1} \tilde{D}_{s,r} z_{s^\prime+r}\right\vert\ge \xi\right) & \ge  
1-\frac2{\sqrt{\pi}} \left(\frac{e}{2} \right)^{\frac{1}4} \frac{\xi}{\sqrt{\sum_{r=0}^{t-1} \tilde{D}_{s,r}^2}}.
\end{align*}
This finally gives 
\begin{align*}
Q_{2\xi}(\tilde{D}) & \ge 1-
\frac{4 \xi} {\sqrt{\pi}} \left(\frac{e}{2} \right)^{\frac{1}4} \frac1{t}\sum_{s=0}^{t-1} \frac{1}{\sqrt{\sum_{r=0}^{t-1} \tilde{D}_{s,r}^2}}.
\end{align*}
Let us now compute a lower bound to the infimum of this quantity over the set of 
$\tilde{D}$ satisfying $\Vert \tilde{D}\Sigma^{1/2} \Vert_F=1$. For this purpose, first note that 
\begin{align*}
\inf_{\Vert \tilde{D}\Sigma^{1/2} \Vert_F=1} \: Q_{2\xi}(\tilde{D}) & \ge 
1-\sup_{\Vert \tilde{D} \Vert_F\ge \sigma_{\max}\left(\Sigma^{1/2}\right)^{-1}} \: 
\frac{4 \xi} {\sqrt{\pi}} \left(\frac{e}{2} \right)^{\frac{1}4} \frac1{t}\sum_{s=0}^{t-1} 
\frac{1}{\sqrt{\sum_{r=0}^{t-1} \tilde{D}_{s,r}^2}}.
\end{align*}
On the other hand, simple manipulations of the optimality conditions using symmetry prove that  
\begin{align*}
\sup_{\Vert A \Vert_F \le 1} \: \frac1{t}\sum_{s=0}^{t-1} 
\frac{1}{\sqrt{\sum_{r=0}^{t-1} A_{s,r}^2}} & = \sqrt{t}.
\end{align*}
Therefore, 
\begin{align}
\inf_{\Vert \tilde{D}\Sigma^{1/2} \Vert_F=1} \: Q_{2\xi}(\tilde{D}) & \ge 1-
\frac{4 \xi} {\sqrt{\pi}} \left(\frac{e}{2} \right)^{\frac{1}4}  
\sigma_{\max}\left(\Sigma^{1/2}\right)\: \sqrt{t}. 
\label{Q}
\end{align}

\subsection{The Gaussian mean width of $K$}
The Gaussian mean width of a set $\mathfrak X$ and its statistical dimension are related by 
\begin{align}
w_G(\mathfrak X)^2 & \le \delta(\mathfrak X) \le w_G(\mathfrak X)^2+1. 
\label{compwidthdim}
\end{align}
See \cite[Proposition 10.2]{ALMT14} for a proof. 
In this subsection, we estimate the Gaussian mean width of $K$ using its statistical dimension. 

\subsubsection{The descent cone $\mathcal D(\Vert\cdot\Vert_*,\mathcal O\mathcal K)$}
The descent cone of the nuclear norm satisfies \cite[Eq. (4.1)]{Tropp:Renaissance15} which we recall now 
\begin{eqnarray}
\label{Troppo}
\mathcal D(\Vert\cdot\Vert_*,\mathcal O\mathcal K)^\circ & = & \overline{{\rm cone}}
\left(\partial \Vert \cdot \Vert_*(\mathcal O\mathcal K) \right). 
\end{eqnarray}
\subsubsection{Computation of $K^\circ$}
Using Proposition 4.2 in \cite{Tropp:Renaissance15}, we obtain 
\begin{eqnarray}
\sup_{\stackrel{\Vert \tilde{D}^*\Vert_F=1,}{\tilde{D}^*\in K}} \quad  \: 
\left\langle \tilde{D}^*,\tilde{H}\right\rangle & \le &
{\rm dist}\left(\tilde{H},K^\circ \right). 
\label{Troppo2}
\end{eqnarray}
We now have to compute the polar cone of $K$. We have 
\begin{eqnarray*}
K^\circ & = & \left\{\Delta \mid \langle \Delta,D \rangle\le 0 \quad \forall \: D\: \in K\right\} \\
& = & \left\{\Delta \mid \langle \frac1{\sqrt{t\: (T-2t+2)}} \: \mathcal M^{-} \: 
\left(\Delta\:\Sigma^{\frac12}\right),D \rangle\le 0 \quad \forall \: D\: \in \mathcal D(\Vert\cdot\Vert_*,\mathcal O\mathcal K)\right\}. 
\end{eqnarray*}
Recall that $\mathcal T$ is the mapping 
\begin{eqnarray*}
\Delta \mapsto \frac1{\sqrt{t\: (T-2t+2)}} \: \mathcal M^{-1} \: \left(\Delta\:\Sigma^{\frac12}\right).
\end{eqnarray*}
Then, we obtain that 
\begin{eqnarray*}
K^\circ & = & \mathcal T^{-1}\left(\mathcal D(\Vert\cdot\Vert_*,\mathcal O\mathcal K)^\circ \right).  
\end{eqnarray*}

\subsubsection{An upper bound on the statistical dimension of $K$}
Let us write the singular value decomposition of $\mathcal O \mathcal K$ as 
\begin{eqnarray*}
\mathcal O \mathcal K & = & 
\left[ 
\begin{array}{cc}
U_1 & U_2 
\end{array}
\right]
\left[
\begin{array}{cc}
{\rm diag}(\sigma_{\mathcal O\mathcal K}) & 0\\
0 & 0 
\end{array}
\right]
\left[ 
\begin{array}{cc}
V_1 & V_2 
\end{array}
\right]^t
\end{eqnarray*}
where $\sigma_{\mathcal O\mathcal K}$ is the vector of the singular values of $\mathcal O \mathcal K$. Moreover, the subdifferential of the Schatten norm is 
given by 
\begin{eqnarray*}
\partial \Vert \cdot \Vert_*(\mathcal O\mathcal K) & = & 
\left[ 
\begin{array}{cc}
U_1 & U_2 
\end{array}
\right]
\left\{
\left[ 
\begin{array}{cc}
I & 0 \\
0 & Y 
\end{array}
\right]\mid \Vert Y\Vert \le 1 \right\}
\left[ 
\begin{array}{cc}
V_1 & V_2 
\end{array}
\right]^t.
\end{eqnarray*}
Therefore, using (\ref{Troppo2}), we obtain that 
\begin{eqnarray*}
\mathbb E\left[\left(\sup_{\stackrel{\Vert \tilde{D}^*\Vert_F=1,}{\tilde{D}^*\in K}} \quad  \: 
\left\langle \tilde{D}^*,\tilde{H}\right\rangle\right)^2\right] & \le & 
\mathbb E\left[\min_{\tau>0, \: \Vert Y\Vert\le 1}\Vert \mathcal T^{-1}\left(
\tau 
\left[ 
\begin{array}{cc}
U_1V_1^t & 0 \\
0 & U_2YV_2^t 
\end{array}
\right]  
\right)
- 
\tilde{H}\Vert_F^2\right].
\end{eqnarray*}
Thus, we get 
\begin{align*}
\mathbb E\left[\left(\sup_{\stackrel{\Vert \tilde{D}^*\Vert_F=1,}{\tilde{D}^*\in K}} \quad  \: 
\left\langle \tilde{D}^*,\tilde{H}\right\rangle\right)^2\right] & \le  
\mathbb E\Bigg[\min_{\tau>0, \: \Vert Y\Vert\le 1} \Vert \mathcal T^{-1}\Vert \: \Bigg(
\tau^2 \Vert U_1V_1^t \Vert_F^2 +\Vert \tau U_2YV_2^t - \mathcal T_{2,2}(\tilde{H})\Vert_F^2 \\
& \hspace{2cm}  + \Vert \mathcal T_{1,2}(\tilde{H}) \Vert_F^2+ \Vert \mathcal T_{2,1}(\tilde{H}) \Vert_F^2\Bigg)
\Bigg]
\end{align*}
where 
\begin{align*}
\mathcal T & = 
\left[
\begin{array}{cc}
\mathcal T_{11} & \mathcal T_{12} \\
\mathcal T_{21} & \mathcal T_{22}
\end{array}
\right],
\end{align*}
and the dimension of $T_{11}$ is ${\rm rank}(\mathcal O\mathcal K)\times {\rm rank}(\mathcal O\mathcal K)$ and 
the dimension of $\mathcal T_{j,j^\prime}$ for all other combinations of $j$ and $j^\prime$ is easily deduced from the dimension of $\mathcal T$.  
which gives, after taking $\tau=\Vert U_2^t\mathcal T_{2,2}(\tilde{H}) V_2\Vert$, 
\begin{align*}
\mathbb E\left[\left(\sup_{\stackrel{\Vert \tilde{D}^*\Vert_F=1,}{\tilde{D}^*\in K}} \quad  \: 
\left\langle \tilde{D}^*,\tilde{H}\right\rangle\right)^2\right] & \le  
 \sigma_{\min}(\mathcal T)^{-1} \: \Bigg(\mathbb E\left[\tau^2\right] \: {\rm rank}(\mathcal O\mathcal K) \\
& \hspace{2cm} + \Bigg(\sigma_{\max}(\mathcal T_{1,2})^2 + \sigma_{\max}(\mathcal T_{2,1})^2\Bigg) \: \mathbb E\Bigg[\Vert \tilde{H}\Vert_F^2\Bigg]\Bigg).
\end{align*}
Note that
\begin{align*}
\tau & \le \Vert \mathcal T_{2,2}\Vert \Vert \tilde{H}\Vert.
\end{align*}
By Gordon's theorem \cite[Theorem 10.2]{Vershynin:ArXiv14}, $\mathbb E\left[\Vert \tilde{H}\Vert\right] \le 2\: \sqrt{t}$.
Moreover, by Lemma \ref{bordel} in the Appendix,  
\begin{align*}
\mathbb E\left[\Vert \tilde{H}\Vert^2 \right] 
& \le \frac2{c} \left(2\: ct+1\right)+2 \: \sqrt{t}.
\end{align*}
On the other hand, $\mathbb E\left[ \Vert \tilde{H} \Vert_F^2 \right]=2t$. Therefore, we obtain that 
\begin{align*}
\delta (K) & = \mathbb E\left[\left(\sup_{\stackrel{\Vert \tilde{D}^*\Vert_F=1,}{\tilde{D}^*\in K}} \quad  \: 
\left\langle \tilde{D}^*,\tilde{H}\right\rangle\right)^2\right]\\
 & \le  2\: \sigma_{\min}(\mathcal T)^{-1} \: \Bigg(\frac1{c} \: \Vert \mathcal T_{2,2}\Vert^2 \left( \left(2\: ct+1\right)
+c \: \sqrt{t}\right) \: {\rm rang}(\mathcal O\mathcal K) \\
& \hspace{1.3cm} + 2\: \Bigg(\sigma_{\max}(\mathcal T_{1,2})^2 +  \sigma_{\max}(\mathcal T_{2,1})^2\Bigg) \: t\Bigg).
\end{align*}
Using (\ref{compwidthdim}), we obtain that 
\begin{eqnarray}
& w_G(K) \le & \label{wGK}\\
\nonumber \\
& \sqrt{2\: \sigma_{\min}(\mathcal T)^{-1} \: \Bigg(\frac1{c} \: \Vert \mathcal T_{2,2}\Vert^2 \left( \left(2\: ct+1\right)
+c \: \sqrt{t}\right) \: {\rm rang}(\mathcal O\mathcal K) 
+ 2\: \Bigg(\sigma_{\max}(\mathcal T_{1,2})^2 +  \sigma_{\max}(\mathcal T_{2,1})^2\Bigg) \: t\Bigg)} &
\nonumber 
\end{eqnarray}

\subsection{Proof of Theorem \ref{main}}
\label{pf}
Combining Lemma \ref{tropp} with (\ref{Q}) and (\ref{wGK}), we obtain that 
\begin{eqnarray*}
\lambda_{\min}\left(\mathcal A,\mathcal D(\Vert\cdot\Vert_*,\mathcal O\mathcal K)\right) 
& \ge & 
\quad t\: \sqrt{T-2t-2} \quad \frac{4 \xi^2} {\sqrt{\pi}} \left(\frac{e}{2} \right)^{\frac{1}4}  
\sigma_{\min}\left(\Sigma^{1/2}\right) \\
& & \hspace{-4cm} - \frac{2\sqrt{2} }{\sigma_{\min}(\mathcal S)}\:  \sqrt{\Bigg(\frac1{c} \: \Vert \mathcal T_{2,2}\Vert^2 \left( \left(2\: ct+1\right)
+c \: \sqrt{t}\right) \: \frac{{\rm rang}(\mathcal O\mathcal K)}{\sigma_{\min}(\mathcal T)} 
+ \Bigg(\sigma_{\max}(\mathcal T_{1,2})^2 +  \sigma_{\max}(\mathcal T_{2,1})^2\Bigg) \: t\Bigg)}- \nu \xi 
\end{eqnarray*} 
Using that 
\begin{align*}
\Vert \mathcal T_{2,2}\Vert^2 & \le \Vert \mathcal T\Vert^2,
\end{align*}
and
\begin{align*}
\sigma_{\max}(\mathcal T_{1,2})^2 +  \sigma_{\max}(\mathcal T_{2,1})^2 & \le 2 \: \Vert \mathcal T\Vert^2,
\end{align*}
and combining this last inequality with Theorem \ref{deter}, we obtain the following proposition.
\begin{prop}
\label{main}
Let $\xi$ be any positive real number. Assume that $\eta$ is such that 
\begin{eqnarray}
\Vert\mathcal O \bar{\mathcal A} \: s_0+ \mathcal N E \Vert & \ge & \eta
\end{eqnarray}
with probability less than or equal to $e^{-\nu^2/2}$ for some $\nu>0$. Then, with probability greater than or equal to $1-e^{-\nu^2/2}$, 
\begin{eqnarray}
\Vert \mathcal O \mathcal K -\hat{L}\Vert_F & \le & \frac{2\eta}
{\Lambda},
\end{eqnarray}
where
\begin{eqnarray*}
& & \Lambda \ge \xi \sqrt{t(T-2t+1)} \left(1-
\frac{4 \xi} {\sqrt{\pi}} \left(\frac{e}{2} \right)^{\frac{1}4}  
\sigma_{\max}\left(\Sigma^{1/2}\right)\: \sqrt{t}\right) \\
\\
& & \hspace{1cm}- \frac{2\sqrt{2} \Vert \mathcal T\Vert }{\sigma_{\min}(\mathcal S)}\:  \sqrt{\left( \left(2\: ct+1\right)
+c \: \sqrt{t}\right) \: \frac{{\rm rang}(\mathcal O\mathcal K)}{c\: \sigma_{\min}(\mathcal T)} 
+ 2 \: t}- \nu \xi. 
\end{eqnarray*}
\end{prop}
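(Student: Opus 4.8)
The plan is to assemble the three ingredients already established—the deterministic inequality of Theorem \ref{deter}, the probabilistic lower bound of Lemma \ref{tropp}, and the two explicit estimates (\ref{Q}) and (\ref{wGK})—into a single bound on $\Vert \mathcal O\mathcal K-\hat L\Vert_F$. First I would condition on the event $\{\Vert\mathcal O\bar{\mathcal A}\,s_0+\mathcal N E\Vert\le\eta\}$, which by hypothesis occurs with probability at least $1-e^{-\nu^2/2}$. On this event Theorem \ref{deter} applies verbatim and yields
\[
\Vert \mathcal O\mathcal K-\hat L\Vert_F\;\le\;\frac{2\eta}{\lambda_{\min}\!\left(\mathcal A,\mathcal D(\Vert\cdot\Vert_*,\mathcal O\mathcal K)\right)},
\]
so the entire task reduces to producing the lower bound $\lambda_{\min}\ge\Lambda$ with $\Lambda$ as stated.

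For that lower bound I would start from Lemma \ref{tropp}, which already expresses $\lambda_{\min}$ as $\xi\sqrt{t(T-2t-2)}\,\inf_{\Vert\tilde D\Sigma^{1/2}\Vert_F=1}Q_{2\xi}(\tilde D)-\tfrac{2}{\sigma_{\min}(\mathcal S)}w_G(K)-\nu\xi$, valid with probability at least $1-e^{-\nu^2/2}$. I would then insert the lower bound (\ref{Q}) for the infimum of $Q_{2\xi}$ and the upper bound (\ref{wGK}) for the Gaussian mean width $w_G(K)$. This is purely a substitution step: the leading positive term becomes $\xi\sqrt{t(T-2t+1)}\bigl(1-\tfrac{4\xi}{\sqrt\pi}(e/2)^{1/4}\sigma_{\max}(\Sigma^{1/2})\sqrt t\bigr)$, while the subtracted width term inherits the square-root expression in (\ref{wGK}) divided by $\sigma_{\min}(\mathcal S)$.

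The final cleanup is to replace the block-wise quantities $\Vert\mathcal T_{2,2}\Vert$ and $\sigma_{\max}(\mathcal T_{1,2})^2+\sigma_{\max}(\mathcal T_{2,1})^2$ appearing in (\ref{wGK}) by the single operator norm $\Vert\mathcal T\Vert$, using the elementary submatrix inequalities $\Vert\mathcal T_{2,2}\Vert\le\Vert\mathcal T\Vert$ and $\sigma_{\max}(\mathcal T_{1,2})^2+\sigma_{\max}(\mathcal T_{2,1})^2\le 2\Vert\mathcal T\Vert^2$, which hold because each block is a compression of $\mathcal T$ between coordinate projections and hence has operator norm at most $\Vert\mathcal T\Vert$. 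Factoring $\Vert\mathcal T\Vert^2$ out of the square root then produces the stated $\Lambda$, and combining with the displayed application of Theorem \ref{deter} gives the conclusion.

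The main obstacle is not any single inequality—the genuinely hard analysis lives in Lemma \ref{tropp} and in the derivations of (\ref{Q}) and (\ref{wGK})—but rather the probabilistic bookkeeping and the internal consistency of the index expressions. One must check that the event $\{\Vert\mathcal O\bar{\mathcal A}\,s_0+\mathcal N E\Vert\le\eta\}$ used for Theorem \ref{deter} and the event on which Lemma \ref{tropp} holds can be intersected without degrading the probability below $1-e^{-\nu^2/2}$; this requires that both be controlled by the same concentration parameter $\nu$, since a naive union bound would only deliver $1-2e^{-\nu^2/2}$. I would also reconcile the slightly different dimension factors ($T-2t+1$, $T-2t+2$, $T-2t-2$) that circulate through Lemma \ref{tropp}, the definitions of $\mathcal S$ and $\mathcal T$, and (\ref{Q}), since the final $\Lambda$ is stated with $\sqrt{t(T-2t+1)}$; verifying that these are interchangeable for the claimed bound, or tightening them consistently, is the delicate point.
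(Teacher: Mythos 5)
Your proposal follows exactly the paper's own proof: condition on the event where Theorem \ref{deter} applies, lower-bound $\lambda_{\min}\left(\mathcal A,\mathcal D(\Vert\cdot\Vert_*,\mathcal O\mathcal K)\right)$ via Lemma \ref{tropp} combined with (\ref{Q}) and (\ref{wGK}), then absorb the block quantities using $\Vert\mathcal T_{2,2}\Vert\le\Vert\mathcal T\Vert$ and $\sigma_{\max}(\mathcal T_{1,2})^2+\sigma_{\max}(\mathcal T_{2,1})^2\le 2\Vert\mathcal T\Vert^2$ before invoking Theorem \ref{deter}. The bookkeeping issues you flag at the end (the union bound over the two events nominally giving only $1-2e^{-\nu^2/2}$, and the inconsistent factors $T-2t+1$, $T-2t+2$, $T-2t-2$) are genuine, but the paper itself glosses over both without resolution, so your treatment is no weaker than the original.
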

Combinig this result with the bounds from Section \ref{conseq}, the proof is completed.   

\section{Conclusion}

The goal of the present note is to show that the performance of nuclear norm penalized subspace-type methods 
can be studied theoretically. We concentrated on a special approach due to Bauer \cite{Bauer:Automatica05}. 
Our approach can easily be extended to the case of the method promoted in \cite{FAZ}. Our next objective 
for future research is to address the case of more general noise sequences such as in \cite{FrancqZakoian:JSPI98}.   

\section{Appendix: Technical intermediate results}
In this section, we gather some technical results used in the proof of Theorem \ref{main}. 

\subsection{Proof of Lemma \ref{tropp}}
\label{prftropp}
\subsubsection{First step}
We have  
\begin{eqnarray}
\lambda_{\min}\left(\mathcal A,\mathcal D(\Vert\cdot\Vert_*,\mathcal O\mathcal K)\right) 
& = & \min_{\stackrel{\Vert D\Vert_F=1,}{D\in \mathcal D(\Vert\cdot\Vert_*,\mathcal O\mathcal K)}}\: 
\Vert\mathcal A(D) \Vert_F \\
& = & \min_{\stackrel{\Vert D\Vert_F=1,}{D\in \mathcal D(\Vert\cdot\Vert_*,\mathcal O\mathcal K)}}\: 
\left(\sum_{s=0}^{t-1} \sum_{s^\prime=0}^{T-2t+1} \:\: \left(\sum_{r=0}^{t-1} D_{s,r} x_{s^\prime+r}\right)^2\right)^{\frac12} 
\end{eqnarray}
Recall that $\Sigma$ is the covariance matrix of $[x_0,\ldots,x_{t-1}]^t$ and that $\Sigma^{\pm \frac12}$ denotes the square root of 
$\Sigma^{\pm 1}$. Thus, 
\begin{eqnarray*}
\left[
\begin{array}{c}
z_0\\
\vdots \\
z_{t-1}
\end{array}
\right]
:=\Sigma^{-\frac12}
\left[
\begin{array}{c}
x_0\\ 
\vdots \\
x_{t-1}
\end{array}
\right]
\end{eqnarray*}
follows the standard Gaussian distribution $\mathcal N(0,I)$. Recall also that $\tilde{D}=D\Sigma^{\frac12}$. Then, we have 
\begin{eqnarray}
\lambda_{\min}\left(\mathcal A,\mathcal D(\Vert\cdot\Vert_*,\mathcal O\mathcal K)\right) 
& = & \min_{\stackrel{\Vert \tilde{D}\Sigma^{-1/2}\Vert_F=1,}{\tilde{D}\in \mathcal D(\Vert\cdot\Vert_*,\mathcal O\mathcal K)
\Sigma^{1/2}}}\: 
\left(\sum_{s=0}^{t-1} \sum_{s^\prime=0}^{T-2t+1} \:\: \left(\sum_{r=0}^{t-1} \tilde{D}_{s,r} z_{s^\prime+r}\right)^2\right)^{\frac12} 
\end{eqnarray}
Now, we have 
\begin{eqnarray*}
\left(\frac1{t\: (T-2t+1)}\quad 
\sum_{s=0}^{t-1} \sum_{s^\prime=0}^{T-2t+1} \:\: \left(\sum_{r=0}^{t-1} \tilde{D}_{s,r} z_{s^\prime+r}\right)^2\right)^{\frac12}
& \ge & \frac1{t\: (T-2t+1)}\quad 
\sum_{s=0}^{t-1} \sum_{s^\prime=0}^{T-2t+1} \:\: \left\vert\sum_{r=0}^{t-1} \tilde{D}_{s,r} z_{s^\prime+r}\right\vert
\end{eqnarray*}
which gives, by Markov's inequality 
\begin{eqnarray*}
\left(\frac1{t\: (T-2t+1)}\quad 
\sum_{s=0}^{t-1} \sum_{s^\prime=0}^{T-2t+1} \:\: \left(\sum_{r=0}^{t-1} \tilde{D}_{s,r} z_{s^\prime+r}\right)^2\right)^{\frac12}
& \ge & \frac{\xi}{t\: (T-2t+1)}\quad 
\sum_{s=0}^{t-1} \sum_{s^\prime=0}^{T-2t+1} \:\: \mathds{1}
\left\{\left\vert\sum_{r=0}^{t-1} \tilde{D}_{s,r} z_{s^\prime+r}\right\vert\ge \xi\right\}. 
\end{eqnarray*}
Thus, we obtain 
\begin{eqnarray*}
& & \left(\frac1{t\: (T-2t+1)}\quad 
 \sum_{s=0}^{t-1} \sum_{s^\prime=0}^{T-2t+1} \:\: \left(\sum_{r=0}^{t-1} \tilde{D}_{s,r} z_{s^\prime+r}\right)^2\right)^{\frac12} \\
& & \hspace{1cm}\ge \quad \xi Q_{2\xi}(\tilde{D}) - \frac{\xi}{t\: (T-2t+1)}\quad 
\sum_{s=0}^{t-1} \sum_{s^\prime=0}^{T-2t+1} \:\: \left( Q_{2\xi}(\tilde{D}) -\mathds{1}
\left\{\left\vert\sum_{r=0}^{t-1} \tilde{D}_{s,r} z_{s^\prime+r}\right\vert\ge \xi\right\}\right). 
\end{eqnarray*}

\subsubsection{Second step}
Let
\begin{eqnarray*}
f(z_0,\ldots,z_{T-t}) & = & 
\sup_{\stackrel{\Vert \tilde{D}\Sigma^{-1/2}\Vert_F=1,}{\tilde{D}\in \mathcal D(\Vert\cdot\Vert_*,\mathcal O\mathcal K)\Sigma^{1/2}}} 
\quad 
\sum_{s=0}^{t-1} \sum_{s^\prime=0}^{T-2t+1} \:\: \left( Q_{2\xi}(\tilde{D}) -\mathds{1}
\left\{\left\vert\sum_{r=0}^{t-1} \tilde{D}_{s,r} z_{s^\prime+r}\right\vert\ge \xi\right\}\right).
\end{eqnarray*}
We will now use the bounded difference inequality to control this quantity. For this purpose, 
notice that 
\begin{eqnarray*}
\vert f(\zeta_0,\ldots,\zeta_s,\ldots,\zeta_{T-t})-f(\zeta_0,\ldots,\zeta^\prime_s,\ldots,\zeta_{T-t})\vert 
& \le & 2 \: t\: (T-2t+2). 
\end{eqnarray*} 
for all $(\zeta_0,\ldots,\zeta_s,\ldots,\zeta_{T-t})$ in $\mathbb R^{T-t+1}$ and $\zeta^\prime_s \in \mathbb R$. Thus, 
\begin{eqnarray*}
f(z_0,\ldots,z_{T-t}) - \mathbb E \left[f(z_0,\ldots,z_{T-t})\right] & \le & \nu \: \sqrt{t\: (T-2t+2)},
\end{eqnarray*}
with probability $1-e^{-\nu^2/2}$ for all $\nu\in \mathbb R_+$. Now, the expected supremum can be 
bounded in the same manner as in \cite[Equation 5.6]{Tropp:Renaissance15}. 
\begin{eqnarray*}
\mathbb E \left[f(z_0,\ldots,z_{T-t})\right] & \le & 
\frac2{\xi} \: \mathbb E \left[ \sup_{\stackrel{\Vert \tilde{D}\Sigma^{-1/2}\Vert_F=1,}{\tilde{D}\in \mathcal D(\Vert\cdot\Vert_*,\mathcal O\mathcal K)\Sigma^{1/2}}}
\quad \sum_{s=0}^{t-1} \sum_{s^\prime=0}^{T-2t+1} \epsilon_{s,s^\prime} \sum_{r=0}^{t-1} \tilde{D}_{s,r} z_{s^\prime+r}
\right]
\end{eqnarray*}
where $\epsilon_{s,r}$, $s=0,\ldots,t-1$ and $r=0,\ldots,t-1$ are independent Rademacher random variables which 
are independent of $z_{s^\prime}$, $s^\prime=0,\ldots,T-t$. Therefore, we obtain 
\begin{eqnarray*}
& & \inf_{\stackrel{\Vert \tilde{D}\Sigma^{-1/2}\Vert_F=1,}{\tilde{D}\in \mathcal D(\Vert\cdot\Vert_*,\mathcal O\mathcal K)\Sigma^{1/2}}}
\left(\frac1{t\: (T-2t+1)}\quad 
 \sum_{s=0}^{t-1} \sum_{s^\prime=0}^{T-2t+1} \:\: \left(\sum_{r=0}^{t-1} \tilde{D}_{s,r} z_{s^\prime+r}\right)^2\right)^{\frac12} \\
& & \hspace{1cm}\ge \quad \xi Q_{2\xi}(\tilde{D}) - \frac{\xi}{t\: (T-2t+1)}\quad 
\Bigg(\frac2{\xi} \: \mathbb E \left[ \sup_{\tilde{D} \in \Sigma^{-1/2} \: \mathcal D(\Vert\cdot\Vert_*,\mathcal O\mathcal K)}
\quad \sum_{s=0}^{t-1} \sum_{s^\prime=0}^{T-2t+1} \epsilon_{s,s^\prime} \sum_{r=0}^{t-1} \tilde{D}_{s,r} z_{s^\prime+r}
\right]\\
& & \hspace{3cm} + \nu \: \sqrt{t\: (T-2t+2)}\Bigg), 
\end{eqnarray*}
which gives 
\begin{eqnarray*}
& & \inf_{\stackrel{\Vert \tilde{D}\Sigma^{-1/2}\Vert_F=1,}{\tilde{D}\in \mathcal D(\Vert\cdot\Vert_*,\mathcal O\mathcal K)\Sigma^{1/2}}}
\left(\sum_{s=0}^{t-1} \sum_{s^\prime=0}^{T-2t+1} \:\: \left(\sum_{r=0}^{t-1} \tilde{D}_{s,r} z_{s^\prime+r}\right)^2\right)^{\frac12} \\
& & \hspace{2cm}\ge \quad \xi \: \sqrt{t\: (T-2t+2)} \: Q_{2\xi}(\tilde{D}) \\  
& & \hspace{3cm}  
-2 \: \mathbb E \left[ \sup_{\stackrel{\Vert \tilde{D}\Sigma^{-1/2}\Vert_F=1,}{\tilde{D}\in \mathcal D(\Vert\cdot\Vert_*,\mathcal O\mathcal K)\Sigma^{1/2}}}
\quad \frac1{\sqrt{t\: (T-2t+2)}} 
\sum_{s=0}^{t-1} \sum_{s^\prime=0}^{T-2t+1} \epsilon_{s,s^\prime} \sum_{r=0}^{t-1} \tilde{D}_{s,r} z_{s^\prime+r}
\right] - \nu \xi. 
\end{eqnarray*}
Let us denote by $W$ the quantity 
\begin{eqnarray*}
W & = & \mathbb E \left[ \sup_{\stackrel{\Vert \tilde{D}\Sigma^{-1/2}\Vert_F=1,}{\tilde{D}\in \mathcal D(\Vert\cdot\Vert_*,\mathcal O\mathcal K)\Sigma^{1/2}}}
\quad \frac1{\sqrt{t\: (T-2t+2)}} 
\sum_{s=0}^{t-1} \sum_{s^\prime=0}^{T-2t+1} \epsilon_{s,s^\prime} \sum_{r=0}^{t-1} \tilde{D}_{s,r} z_{s^\prime+r}
\right]. 
\end{eqnarray*}
Then, we have 
\begin{eqnarray*}
W & = & \mathbb E \left[ \sup_{\stackrel{\Vert \tilde{D}\Sigma^{-1/2}\Vert_F=1,}{\tilde{D}\in \mathcal D(\Vert\cdot\Vert_*,\mathcal O\mathcal K)\Sigma^{1/2}}}
\quad \frac1{\sqrt{t\: (T-2t+2)}} \: \langle \tilde{D},H\rangle \right],
\end{eqnarray*}
where we recall that $H$ is the random matrix whose components are given by 
\begin{eqnarray*}
H_{s,r} & = & \sum_{s^\prime=0}^{T-2t+1} \epsilon_{s,s^\prime} z_{s^\prime+r}
\end{eqnarray*}
and $\Sigma^H$ denotes the covariance matrix of ${\rm vec}(H)$. Let $\tilde{H}=
\mathcal M(H)$ where $\mathcal M$ denotes the operator defined by 
\begin{eqnarray*}
\mathcal M(\cdot) & = & {\rm Mat}\left(\Sigma^{H^{-1/2}}{\rm vec}(\cdot)\right).
\end{eqnarray*}
Then $\tilde{H}$ is a Gaussian matrix with i.i.d. components with law $\mathcal N(0,1)$. 
Using the invertibility of $\mathcal M$ proved in Section \ref{conseq}, we get 
\begin{eqnarray*}
W & = \mathbb E \left[ \sup_{\stackrel{\Vert \mathcal M^{-*} \left(\tilde{D}^*\: \Sigma^{-1/2}\right)
\Vert_F=1,}{\tilde{D}^*\in K}}
\quad  \: \left\langle \tilde{D}^*,\tilde{H}\right\rangle \right],
\end{eqnarray*}
where 
\begin{eqnarray*}
K & = & \frac1{\sqrt{t\: (T-2t+2)}} \: \mathcal M^{-*} \: 
\left(\mathcal D(\Vert\cdot\Vert_*,\mathcal O\mathcal K)\:\Sigma^{\frac12}\right),
\end{eqnarray*}
where we recall that $\mathcal M^{-*}$ is the adjoint of the inverse of $\mathcal M$.  
Moreover, we have 
\begin{eqnarray*}
\sup_{\stackrel{\Vert \mathcal M^{-*} \left(\tilde{D}^*\right)
\: \Sigma^{-1/2}\Vert_F=1,}{\tilde{D}^*\in K}}
\quad  \: 
\left\langle \tilde{D}^*,\tilde{H}\right\rangle 
& \le & \frac1{\sigma_{\min}(\mathcal S)}\quad \sup_{\stackrel{\Vert \tilde{D}^*\Vert_F=1,}{\tilde{D}^*\in K}}
\quad  \: \left\langle \tilde{D}^*,\tilde{H}\right\rangle 
\end{eqnarray*}
where $\sigma_{\min}(\mathcal S)$ is the smallest singular value of the operator $\mathcal S$ defined by
\begin{eqnarray*}
\mathcal S(\cdot) & \mapsto & \mathcal M^{-*} \left(\cdot \right) \: \Sigma^{-1/2}.
\end{eqnarray*}
Thus, 
\begin{align*}
W & \le \frac{w_G(K)}{\sigma_{\min}(\mathcal S)} 
\end{align*}
and the proof is completed. 

\subsection{Control of $\mathbb E\left[\Vert \tilde{H}\Vert^2 \right]$}

\begin{lemm}
\label{bordel}
We have 
\begin{align*}
\mathbb E\left[\Vert \tilde{H}\Vert^2 \right] 
& \le \left(1+\frac{1}{2 \: ct}\right)\mathbb E\left[\Vert \tilde{H}\Vert\right]^2+\mathbb E[\Vert \tilde{H}\Vert ].
\end{align*}
\end{lemm}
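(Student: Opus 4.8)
The plan is to reduce the statement to the elementary identity $\mathbb E[\Vert\tilde H\Vert^2]=\mathbb E[\Vert\tilde H\Vert]^2+\mathrm{Var}(\Vert\tilde H\Vert)$ and then to control the two pieces separately, using that $\tilde H$ is a $t\times t$ matrix with i.i.d.\ $\mathcal N(0,1)$ entries and that $\Vert\cdot\Vert$ is the spectral norm (consistently with the use of Gordon's theorem $\mathbb E[\Vert\tilde H\Vert]\le 2\sqrt t$ in the main proof). Since the term $\mathbb E[\Vert\tilde H\Vert]^2$ already sits inside the claimed right-hand side, the entire content of the lemma is the variance estimate $\mathrm{Var}(\Vert\tilde H\Vert)\le\frac1{2ct}\,\mathbb E[\Vert\tilde H\Vert]^2+\mathbb E[\Vert\tilde H\Vert]$.

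First I would observe that $M\mapsto\Vert M\Vert$ is $1$-Lipschitz with respect to the Frobenius norm, because $|\,\Vert M\Vert-\Vert M'\Vert\,|\le\Vert M-M'\Vert\le\Vert M-M'\Vert_F$. As $\mathrm{vec}(\tilde H)$ is a standard Gaussian vector in $\mathbb R^{t^2}$, the Gaussian concentration inequality for Lipschitz functions (equivalently the Gaussian Poincar\'e inequality) controls the fluctuations of $\Vert\tilde H\Vert$ about its mean. Concretely, writing a subgradient of the spectral norm at $\tilde H$ as the rank-one matrix $u_1v_1^t$ built from the leading singular vectors, which has unit Frobenius norm almost surely, the Poincar\'e bound $\mathrm{Var}(f)\le\mathbb E[\Vert\nabla f\Vert_F^2]$ yields $\mathrm{Var}(\Vert\tilde H\Vert)\le\frac1{2c}$ for the concentration constant $c$ fixed in the statement.

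It then remains to convert this constant variance bound into the two terms of the lemma by means of a lower bound on the mean. Testing the operator norm against the first coordinate vector gives $\mathbb E[\Vert\tilde H\Vert]\ge\mathbb E[\Vert g\Vert_2]$ with $g\sim\mathcal N(0,I_t)$, which is of order $\sqrt t$; indeed $\mathbb E[\Vert g\Vert_2]^2=\mathbb E[\Vert g\Vert_2^2]-\mathrm{Var}(\Vert g\Vert_2)\ge t-1$ since $\Vert\cdot\Vert_2$ is again $1$-Lipschitz. Thus $\frac1{2c}$ is of the same order as $\frac{\mathbb E[\Vert\tilde H\Vert]^2}{2ct}$, the small shortfall between $\mathbb E[\Vert\tilde H\Vert]^2$ and $t$ being absorbed by the additive term $\mathbb E[\Vert\tilde H\Vert]$ (which is itself at least $1$). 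Substituting $\mathrm{Var}(\Vert\tilde H\Vert)\le\frac1{2ct}\mathbb E[\Vert\tilde H\Vert]^2+\mathbb E[\Vert\tilde H\Vert]$ into the decomposition gives the claim.

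The Lipschitz estimate and the variance bound are routine. The step requiring genuine care is the constant bookkeeping: matching the generic variance constant produced by concentration to $\frac1{2ct}$ and checking that the deficit in the mean lower bound $\mathbb E[\Vert\tilde H\Vert]^2\ge t-1$ (whose exact value $\mathbb E[\Vert g\Vert_2]^2$ lies slightly below $t$) is compensated by the additive $\mathbb E[\Vert\tilde H\Vert]$. I expect this to be the only real obstacle, since it is precisely where the otherwise generous additive term $\mathbb E[\Vert\tilde H\Vert]$ in the statement earns its keep.
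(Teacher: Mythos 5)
Your proposal takes a genuinely different route from the paper, and in substance it works --- indeed it proves something slightly stronger. The paper never passes through the variance identity: it writes $\mathbb{E}[\Vert\tilde H\Vert^2]=\int_0^{+\infty}\mathbb{P}(\Vert\tilde H\Vert^2\ge s)\,ds$, splits the integral at $s=\mathbb{E}[\Vert\tilde H\Vert]^2$, inserts the Gaussian concentration bound $\mathbb{P}(\Vert\tilde H\Vert\ge\mathbb{E}[\Vert\tilde H\Vert]+u)\le e^{-cu^2}$ rescaled so the exponent reads $-4c\delta^2t$, and changes variables $r=(\sqrt{s}-\mathbb{E}[\Vert\tilde H\Vert])^2$; the two terms $\tfrac{1}{2ct}\mathbb{E}[\Vert\tilde H\Vert]^2$ and $\mathbb{E}[\Vert\tilde H\Vert]$ in the statement are precisely the values of the two resulting integrals, which is the only reason $c$ appears in the lemma at all. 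Note also that the paper's rescaling step implicitly uses $\mathbb{E}[\Vert\tilde H\Vert]\approx 2\sqrt t$ as a \emph{lower} bound, whereas Gordon's theorem only supplies the upper bound; your route avoids both the tail integration and this delicate step. Concretely: the spectral norm is $1$-Lipschitz in Frobenius norm, so Gaussian Poincar\'e gives $\mathrm{Var}(\Vert\tilde H\Vert)\le 1$, and your column-test bound $\mathbb{E}[\Vert\tilde H\Vert]\ge\mathbb{E}[\Vert g\Vert_2]\ge\sqrt{t-1}\ge 1$ (for $t\ge 2$) then yields $\mathbb{E}[\Vert\tilde H\Vert^2]\le\mathbb{E}[\Vert\tilde H\Vert]^2+1\le\mathbb{E}[\Vert\tilde H\Vert]^2+\mathbb{E}[\Vert\tilde H\Vert]$, which implies the lemma for \emph{every} $c>0$, the term $\tfrac{1}{2ct}\mathbb{E}[\Vert\tilde H\Vert]^2$ being pure slack.

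The one step of yours that is wrong as written is the assertion that Poincar\'e ``yields $\mathrm{Var}(\Vert\tilde H\Vert)\le\tfrac{1}{2c}$'': the Poincar\'e inequality has constant $1$ and knows nothing about $c$, so what it yields is $\mathrm{Var}(\Vert\tilde H\Vert)\le\mathbb{E}\bigl[\Vert u_1v_1^t\Vert_F^2\bigr]=1$. Identifying $1\le\tfrac{1}{2c}$ would require the (true, but unproven in your note) fact that any constant $c$ valid in the Gaussian concentration inequality for all $1$-Lipschitz functions must satisfy $c\le\tfrac12$, by testing on a linear functional; similarly, your later absorption step $\tfrac{1}{2ct}\le\mathbb{E}[\Vert\tilde H\Vert]$ can fail if $c$ is small. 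Neither repair is needed: as laid out above, $\mathrm{Var}(\Vert\tilde H\Vert)\le 1\le\mathbb{E}[\Vert\tilde H\Vert]$ already closes the proof, so you should simply delete the $\tfrac{1}{2c}$ matching and the order-comparison discussion --- they are the only fragile points in an otherwise cleaner argument than the paper's.
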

\begin{proof}
By Gaussian concentration \cite[Proposition 4]{Tao:Whatsnew09} and the fact that 
the spectral (operator) norm is 1-Lipschitz, we obtain that for all $u>0$, 
\begin{align*}
\mathbb P \left( \Vert \tilde{H}\Vert\ge \mathbb E \left[\Vert \tilde{H} \Vert \right] +u \right) & \le e^{-c u^2}
\end{align*}
for some absolute positive constant $c$. Taking $u=\delta \mathbb E \left[\Vert \tilde{H} \Vert \right]$, we obtain that 
\begin{align*}
\mathbb P \left( \Vert \tilde{H}\Vert\ge (1+\delta) \: \mathbb E \left[\Vert \tilde{H} \Vert \right] \right) & \le e^{-4 \: c \delta^2 t}.
\end{align*} 
Thus, 
\begin{align*}
\mathbb E\left[\Vert \tilde{H}\Vert^2 \right] & = \int_0^{+\infty} \mathbb P\left( \Vert \tilde{H}\Vert^2 \ge s \right) d s \\
& = \int_0^{\mathbb E\left[\Vert \tilde{H}\Vert\right]^2} \mathbb P\left( \Vert \tilde{H}\Vert^2 \ge s \right) d s 
+ \int_{\mathbb E\left[\Vert \tilde{H}\Vert\right]^2}^{+\infty} \mathbb P\left( \Vert \tilde{H}\Vert^2 \ge s \right) d s \\
& = \mathbb E\left[\Vert \tilde{H}\Vert\right]^2 
+ \int_{\mathbb E\left[\Vert \tilde{H}\Vert\right]^2}^{+\infty} \mathbb P\left( \Vert \tilde{H}\Vert \ge \sqrt{s} \right) d s \\
& \le \mathbb E\left[\Vert \tilde{H}\Vert\right]^2 
+ \int_{\mathbb E\left[\Vert \tilde{H}\Vert\right]^2}^{+\infty} \exp \left(-4 \: c 
\left(\frac{\sqrt{s}-\mathbb E[\Vert \tilde{H}\Vert ]}{\mathbb E[\Vert \tilde{H}\Vert ]}\right)^{2}\: t\right)\: d s
\end{align*}
and making the change of variable $r=(\sqrt{s}-\mathbb E[\Vert \tilde{H}\Vert ])^2$, we obtain 
\begin{align*}
\mathbb E\left[\Vert \tilde{H}\Vert^2 \right] & = \int_0^{+\infty} \mathbb P\left( \Vert \tilde{H}\Vert^2 \ge s \right) d s \\
& \le \mathbb E\left[\Vert \tilde{H}\Vert\right]^2 
+ \int_{0}^{+\infty} \exp \left(-4 \:  
\frac{ct}{\mathbb E[\Vert \tilde{H}\Vert ]^2} \: r\right)\: \left(1+\frac1{\sqrt{r}}\right)\: d r \\
& \le \mathbb E\left[\Vert \tilde{H}\Vert\right]^2 
+ 2 \: \int_{0}^{+\infty} \exp \left(-4 \:  
\frac{ct}{\mathbb E[\Vert \tilde{H}\Vert ]^2} \: r\right)\: d r + 
\int_{0}^{\mathbb E[\Vert \tilde{H}\Vert ]^2} \frac1{\sqrt{r}}\: d r.
\end{align*}
Thus, we obtain 
\begin{align*}
\mathbb E\left[\Vert \tilde{H}\Vert^2 \right] 
& \le \mathbb E\left[\Vert \tilde{H}\Vert\right]^2+\mathbb E[\Vert \tilde{H}\Vert ] 
-   
\frac{\mathbb E[\Vert \tilde{H}\Vert ]^2}{2 \: ct}\: \left[ \exp \left(-4 \:  
\frac{ct}{\mathbb E[\Vert \tilde{H}\Vert ]^2} \: r\right)\right]_0^{+\infty} \\
& \le \left(1+\frac{1}{2 \: ct}\right)\mathbb E\left[\Vert \tilde{H}\Vert\right]^2+\mathbb E[\Vert \tilde{H}\Vert ].
\end{align*}
This completes the proof.
\end{proof}

\subsection{Some properties of $\Sigma$, $\Sigma^H$, $\mathcal M$, $\mathcal S$ and $\mathcal T$}
\subsubsection{The spectrum of $\Sigma$}

The spectrum of $\Sigma$ can be studied using the methods of Grenander and Szego \cite{GrenanderSzego58}. In 
\cite{PalmaBondon:SPL03}, the classical results are extended to the case of generalized fractional processes. 
it was shown in particular by Grenander and Szego in \cite[Chapter 5]{GrenanderSzego58} that $2 \pi m \le \lambda \le 2 \pi M$
for any eigenvalue $\lambda$ of $\Sigma$, where $m$ and $M$ are the essential infimum and supremum of the spectral density function $f$ of the process. For ARMA 
processes, this function is just 
\begin{align*}
f(\nu) & =  \frac{\sigma_\epsilon^2}{2\pi} \left\vert \frac{\theta(e^{i\nu})}{\phi(e^{i\nu})}\right\vert^2
\end{align*}
where 
\begin{align*}
\phi(z) & = 1-a_1z-\cdots-a_pz^p \textrm{ and } \theta(z) = 1+b_1z+\cdots+b_qz^q.
\end{align*}
   
\subsubsection{The spectrum of $\Sigma^H$}
\label{specSigH}
Recall that $H$ is the random matrix whose components are given by 
\begin{eqnarray*}
H_{s,r} & = & \sum_{s^\prime=0}^{T-2t+1} \epsilon_{s,s^\prime} z_{s^\prime+r}. 
\end{eqnarray*}
where $\epsilon_{s,s^\prime}$, $s=0,\ldots,t-1$ and $s^\prime=0,\ldots,T-2t+1$ are independent Rademacher random variables which 
are independent of $z_{s^\prime}$, $s^\prime=0,\ldots,T-t$.

Using matrix representation, we have
\bean
H &=& \epsilon z  \\
&=& \begin{pmatrix}
\epsilon_{0,1} & \epsilon_{0,2} & \cdots & \epsilon_{0, T-2t+1}  \\
\epsilon_{1,1} & \epsilon_{1,2} & \cdots & \epsilon_{1, T-2t+1}  \\
\vdots & \vdots & \ddots  & \vdots  \\
\epsilon_{t-1,1} & \epsilon_{t-1,2} & \cdots & \epsilon_{t-1,T-2t+1}  
\end{pmatrix}
\begin{pmatrix}
z_{0} & z_{1} & \cdots & z_{t-1}  \\
z_{1} & z_{2} & \cdots & z_{t}  \\  
\vdots& \vdots& \ddots & \vdots  \\ 
z_{T-2t+1}& z_{T-2t+2}& \cdots & z_{T-t}
\end{pmatrix}. 
\eean
Let $Z_p$ be the $(p+1)$-th column of $z$. Then
\bean
{\rm vec} (H)&=& 
\begin{pmatrix}
\epsilon Z_0 \\
\epsilon Z_{1} \\
 \vdots \\
  \epsilon Z_{t-1}
\end{pmatrix}.
\eean
The $(p,q)$-th block of $\Sigma^H$ is given by
\bean
\Sigma^H_{[p,q]} = \mathbb E[\epsilon E[Z_p Z_q^t] \epsilon^t],
\eean
where for $p<q$
\bean
\mathbb E[Z_p Z_q^t] = 
\begin{pmatrix}
0 & 0 \\
I_{T-2t+1 - (q-p)} & 0 
\end{pmatrix}.
\eean
Here, $I_{T-2t+1 - (q-p)}$ denotes the identity matrix of dimension $T-2t+1 - (q-p)$.

Partitioning $\epsilon$  appropriately as
\bean
\epsilon = \begin{pmatrix} \epsilon_{[1,1]} & \epsilon_{[1,2]} \\ \epsilon_{[2,1]} & \epsilon_{[2,2]}   \end{pmatrix},
\eean
we deduce that
 \bean
\Sigma^H_{[p,q]} &=& 
E\left[
\begin{pmatrix} \epsilon_{[1,1]} & \epsilon_{[1,2]} \\ \epsilon_{[2,1]} & \epsilon_{[2,2]}   \end{pmatrix}
\begin{pmatrix}
0 & 0 \\
I_{T-2t+1 - (q-p)} & 0 
\end{pmatrix}
\begin{pmatrix} \epsilon_{[1,1]}^t & \epsilon_{[2,1]}^t \\ \epsilon_{[1,2]}^t & \epsilon_{[2,2]}^t   \end{pmatrix}
\right] \\
&=&
E \begin{pmatrix} 
\epsilon_{[1,2]}\epsilon_{[1,1]}^t & \epsilon_{[1,2]}\epsilon_{[2,1]}^t \\
\epsilon_{[2,2]}\epsilon_{[1,1]}^t & \epsilon_{[2,2]}\epsilon_{[2,1]}^t   
\end{pmatrix} \\
&=&0
 \eean
for $p <q$. Similarly, we can show that $\Sigma^H_{[p,q]}=0$ for $p>q$. As for $p=q$, we have 
$E[Z_p Z_p^t] = I_{T-2t+1}$. Thus
\bean
\Sigma^H_{[p,p]} = E[\epsilon \epsilon^t] = (T-2t+1)I_{t}
\eean
It is then follows that $\Sigma^H = (T-2t+1) I_{t(T-2t+1)}$.

\subsubsection{Consequences for $\mathcal M$, $\mathcal S$ and $\mathcal T$}
\label{conseq}

Recall that $\mathcal M$ denotes the operator defined by 
\begin{eqnarray}
\mathcal M & = & {\rm Mat}\left(\Sigma^{H^{-1/2}}{\rm vec}(\cdot)\right)
\label{Mdef}
\end{eqnarray}
and $\mathcal M^{-*}$ denotes the adjoint of the inverse of $\mathcal M$. Using the 
resuts of Section \ref{specSigH}, we obtain that 
\begin{eqnarray}
\mathcal M & = & \frac1{\sqrt{T-2t+1}}\ Id. 
\label{Mdefbis}
\end{eqnarray}
and 
\begin{eqnarray}
\mathcal M^{-*} & = & \sqrt{T-2t+1}\ Id. 
\label{Mdefbis}
\end{eqnarray}

Using these results, we obtain that $\mathcal S$ is the operator defined by
\begin{eqnarray*}
\mathcal S(\cdot) & \mapsto & \frac1{\sqrt{T-2t+1}}\: \cdot  \: \Sigma^{-1/2}.
\end{eqnarray*}
and $\mathcal T$ is the mapping 
\begin{eqnarray*}
\mathcal T(\cdot) \mapsto \frac1{\sqrt{t}} \: \cdot\:\:\Sigma^{\frac12}.
\end{eqnarray*}
We thus have the following results on $\mathcal T$.
\begin{eqnarray*}
\| \mathcal T\| & \le & \frac{\sqrt{\sigma_{\max}(\Sigma)}}{\sqrt{t}}
\end{eqnarray*}
and 
\begin{eqnarray*}
\sigma_{\min} (\mathcal T) & \ge & \frac{\sqrt{\sigma_{\min}(\Sigma)}}{\sqrt{t}}.
\end{eqnarray*}
We also obtain that  
\begin{eqnarray*}
\sigma_{\min}(\mathcal S) & \ge & \frac{\sqrt{\sigma_{\min}(\Sigma)}}{\sqrt{T-2t+1}} .
\end{eqnarray*}

\subsection{Some properties of the $\chi^2$ distribution}

We recall the following useful
bounds for the $\chi^2(\nu)$ distribution of degree of freedom $\nu$.
\begin{lemm} {\rm \cite[Lemma B.1]{ChretienDarses:IEEEIT14}} The following bounds hold:
\label{cki}
\bean
\bP\left(\chi(\nu) \ge  \sqrt{\nu}+ \sqrt{2t} \right) & \le & \exp(-t)\\
\bP\left(\chi(\nu) \le \sqrt{u \nu}\right)  & \le & \frac2{\sqrt{\pi \nu}}
\left(u\ e/2\right)^{\frac{\nu}4}.
\eean
\end{lemm}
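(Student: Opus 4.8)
The two estimates are unrelated, so the plan is to prove them in turn, writing $\chi(\nu)=\|Z\|_2$ for $Z\sim\mathcal N(0,I_\nu)$. For the upper tail I would invoke Gaussian concentration exactly as in the proof of Lemma \ref{bordel}: the Euclidean norm $z\mapsto\|z\|_2$ is $1$-Lipschitz, so $\bP(\chi(\nu)\ge\mathbb E[\chi(\nu)]+s)\le e^{-s^2/2}$ for every $s>0$. Jensen's inequality gives $\mathbb E[\chi(\nu)]\le\sqrt{\mathbb E[\chi(\nu)^2]}=\sqrt\nu$, whence $\bP(\chi(\nu)\ge\sqrt\nu+s)\le e^{-s^2/2}$, and the substitution $s=\sqrt{2t}$ produces the stated bound $e^{-t}$ with no constant left to track.

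For the small-ball bound, a plain Chernoff estimate on $\chi^2_\nu$ through its Laplace transform $\mathbb E[e^{-\lambda\chi^2_\nu}]=(1+2\lambda)^{-\nu/2}$ already delivers the correct exponential factor $\exp(\tfrac\nu2(1-u+\ln u))$ (optimizing at $\lambda=\tfrac{1-u}{2u}$, valid for $u<1$), but it carries no polynomial prefactor; to capture the $\tfrac1{\sqrt{\pi\nu}}$ I would instead integrate the density directly. Since $\bP(\chi(\nu)\le\sqrt{u\nu})=\int_0^{u\nu} f(x)\,dx$ with $f(x)=\tfrac{x^{\nu/2-1}e^{-x/2}}{2^{\nu/2}\Gamma(\nu/2)}$, and since for $u<1$ the upper limit $u\nu$ lies below the mode of $\chi^2_\nu$, the integrand is increasing and the integral is endpoint-dominated: from $(\log f)'(x)\ge\tfrac{\nu/2-1}{u\nu}-\tfrac12$ on $(0,u\nu]$ one obtains the clean Laplace-type estimate $\int_0^{u\nu}f\le f(u\nu)/\big(\tfrac{\nu/2-1}{u\nu}-\tfrac12\big)$, which contributes the $\tfrac{2u}{1-u}$ prefactor. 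Bounding $f(u\nu)$ via Stirling's lower bound $\Gamma(\tfrac\nu2+1)\ge\sqrt{\pi\nu}\,(\tfrac{\nu}{2e})^{\nu/2}$ then yields both the $\tfrac1{\sqrt{\pi\nu}}$ and the exponential factor $\exp(\tfrac\nu2(1-u+\ln u))$.

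To reach the exact stated shape I would relax this exponent with the elementary tangent-line inequality $\ln u\le 2u-1-\ln2$ (equality at $u=\tfrac12$), which converts $\tfrac\nu2(1-u+\ln u)$ into $\tfrac\nu4\ln(ue/2)$, i.e.\ into $(ue/2)^{\nu/4}$; at the extremal point $u=\tfrac12$ the prefactor $\tfrac{2u}{1-u}$ equals exactly $2$, matching the constant in the claim. The main obstacle is precisely this constant bookkeeping in the lower tail: one must verify that on the range where the right-hand side is below one, which is the only range in which the inequality has content since $(ue/2)^{\nu/4}$ exceeds one as $u\uparrow 1$, the prefactor $\tfrac{2u}{1-u}$ remains dominated by $2$ once the strict exponential slack of the tangent inequality away from $u=\tfrac12$ is used, and that the finite-$\nu$ corrections in the endpoint estimate and in Stirling are absorbed. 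Everything outside this constant-tracking is routine; alternatively one may simply cite \cite{ChretienDarses:IEEEIT14}, from which the lemma is quoted verbatim.
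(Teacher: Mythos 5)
The first thing to note is that the paper contains no proof of Lemma \ref{cki} at all: it is quoted from \cite[Lemma B.1]{ChretienDarses:IEEEIT14}, so your closing fallback (``simply cite'') is literally what the paper does, and the only question is whether your sketch stands on its own. The upper-tail half does. Gaussian concentration for the $1$-Lipschitz map $z\mapsto\Vert z\Vert_2$ with the sharp constant, $\mathbb P\left(\chi(\nu)\ge\mathbb E[\chi(\nu)]+s\right)\le e^{-s^2/2}$, together with $\mathbb E[\chi(\nu)]\le\sqrt{\nu}$ by Jensen, gives the first inequality exactly. One nuance: this is not ``exactly as in the proof of Lemma \ref{bordel}'', which only invokes the version $e^{-cu^2}$ with an unspecified absolute constant $c$; you need, and do correctly state, the sharp Cirel'son--Ibragimov--Sudakov form.

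The lower-tail half has a genuine gap, and the repair you propose provably cannot close it. Keeping the finite-$\nu$ terms, your endpoint-domination step needs $m=\frac{\nu/2-1}{u\nu}-\frac12=\frac{\nu(1-u)-2}{2u\nu}>0$, hence $\nu>2$ and $u<1-2/\nu$ (for $\nu\in\{1,2\}$ the $\chi^2_\nu$ density is \emph{decreasing}, so ``endpoint-dominated'' is false and those cases need a separate direct check), and $f(u\nu)/m$ combined with your Stirling bound yields the exact prefactor $\frac{1}{\sqrt{\pi\nu}}\cdot\frac{\nu}{\nu(1-u)-2}=\frac{1}{\sqrt{\pi\nu}}\cdot\frac{1}{1-u-2/\nu}$ (your $\frac{2u}{1-u}$ is a bookkeeping slip; the correct asymptotic is $\frac1{1-u}$, though both equal $2$ at $u=\tfrac12$). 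At $u=\tfrac12$ this prefactor is $\frac{2}{1-4/\nu}\cdot\frac{1}{\sqrt{\pi\nu}}$, strictly larger than the target $\frac{2}{\sqrt{\pi\nu}}$ for every finite $\nu$ (and the method gives nothing at all for $\nu\le4$ there), while your tangent-line inequality $\ln u\le 2u-1-\ln 2$ --- which is correct and does convert $(ue^{1-u})^{\nu/2}$ into $(ue/2)^{\nu/4}$ for \emph{all} $u>0$ --- holds with \emph{equality} at $u=\tfrac12$. So there is zero exponential slack precisely where the prefactor overshoots: ``slack away from $u=1/2$'' cannot absorb an excess located at $u=1/2$, and a second-order expansion shows failure persists on a band of width of order $1/\nu$ around $\tfrac12$ even if you upgrade the endpoint estimate to the sharper incomplete-gamma series bound $\gamma(s,x)\le x^{s}e^{-x}/(s-x)$ for $x<s$, whose prefactor $\frac{1}{1-u}$ hits $2$ exactly at $u=\tfrac12$. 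If you want a self-contained derivation of the stated \emph{shape}, the natural source of the $\nu/4$ exponent is a Cauchy--Schwarz halving rather than a Laplace endpoint estimate:
\[
\int_0^{u\nu}x^{\nu/2-1}e^{-x/2}\,dx\;\le\;\left(\int_0^{u\nu}x^{\nu/2-1}\,dx\right)^{1/2}\Gamma(\nu/2)^{1/2},
\]
which with the same Stirling bound gives $\mathbb P\left(\chi(\nu)\le\sqrt{u\nu}\right)\le(\pi\nu)^{-1/4}\,(ue/2)^{\nu/4}$ for all $u>0$ and all $\nu\ge1$ with no case analysis --- though note this matches the displayed prefactor $2/\sqrt{\pi\nu}$ only for $\nu\le 5$. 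In sum: your first inequality is proved; your second is established only up to the constant and only on a restricted $(u,\nu)$ range, and for the verbatim statement the safe course is the one the paper itself takes, namely citing \cite{ChretienDarses:IEEEIT14}.
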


\end{document}